\documentclass[12pt,thmsa, reqno]{amsart}

\let\oldtocsection=\tocsection

\let\oldtocsubsection=\tocsubsection

\renewcommand{\tocsection}[2]{\hspace{0em}\oldtocsection{#1}{#2}}
\renewcommand{\tocsubsection}[2]{\hspace{2em}\oldtocsubsection{#1}{#2}}






\usepackage{setspace}



\usepackage{amssymb, euscript,  mathrsfs}
\usepackage[dvips]{graphics}
\usepackage[title]{appendix}

\input{amssym.def}

\input{amssym.tex}

\usepackage{lineno}

\usepackage{tikz}
\usepackage[utf8]{inputenc}

\usetikzlibrary{matrix,arrows,decorations.pathmorphing}

\usepackage[all,cmtip]{xy}

\usepackage{amsmath}
\let\oldAA\AA
\renewcommand{\AA}{\text{\normalfont\oldAA}}

\def\Cal{\mathcal}

\def\C{{\Cal C}}

\def\S{{\Cal S}}

\def\bbr{{\Bbb R}}

\def\bbs{{\Bbb S}}

\def\rn{\bbr^n}

\def\part{\partial}
\def\intl{\int\limits}

\def\Lam{\Lambda}

\def\Gam{\Gamma}

\def\a{\alpha}

\def\Del{\Delta}

\def\vp{\varphi}

\def\Gam{\Gamma}
\def\Lam{\Lambda}
\def\sig{\sigma}
\def\lam{\lambda}

\def\e{\varepsilon}

\def\chi{{\bf 1}}

\def\snm1{\bbs^{n-1}}


\def\intl{\int\limits}



\def\Cs{\mathscr{C}}

\def\Cs{\mathscr{C c 1234}}

\def\cd{\stackrel{*}{\C}\!{}_{m, k}^\lam}
\def\sd{\stackrel{*}{\S}\!{}_{m, k}^\lam}
\def\cd0{\stackrel{*}{\C}\!{}_{m, k}^\lam}
\def\sd0{\stackrel{*}{\S}\!{}_{m, k}^\lam}

\def\ncd0{\stackrel{*}{\Cs}\!{}_{m, k}^\lam}



\newtheorem{theorem}{Theorem}[section]
\newtheorem{lemma}[theorem]{Lemma}

\theoremstyle{definition}

\theoremstyle{remark}

\numberwithin{equation}{section}

\theoremstyle{corollary}

\numberwithin{equation}{section}


\newcommand{\be}{\begin{equation}}
\newcommand{\ee}{\end{equation}}

\newcommand{\bea}{\begin{eqnarray}}
\newcommand{\eea}{\end{eqnarray}}
\newcommand{\Bea}{\begin{eqnarray*}}
\newcommand{\Eea}{\end{eqnarray*}}

\def\sideremark#1{\ifvmode\leavevmode\fi\vadjust{\vbox to0pt{\vss
 \hbox to 0pt{\hskip\hsize\hskip1em
\vbox{\hsize2cm\tiny\raggedright\pretolerance10000
 \noindent #1\hfill}\hss}\vbox to8pt{\vfil}\vss}}}%

                                                   %


\begin{document}



\title[A Note on   the Sonar Transform]
{A Note on   the Sonar Transform and Related Radon Transforms}

\author{ B. Rubin}

\address{Department of Mathematics, Louisiana State University, Baton Rouge,
Louisiana 70803, USA}
\email{borisr@lsu.edu}

\subjclass[2010]{Primary 44A12; Secondary 47G10}



\keywords{Sonar Transforms, Radon transforms, $L^p$ estimates, inversion formulas.}

\begin{abstract}
The sonar transform in geometric tomography maps 
 functions on the Euclidean half-space to integrals of those  functions over hemispheres centered on the boundary hyperplane.
 We obtain sharp $L^p$-$L^q$ estimates for this  transform and new explicit inversion formulas under minimal assumptions for functions.
 The main results follow from  intriguing connection between the sonar transform, the Radon  transform over paraboloids, and the transversal Radon transform,
 which integrates functions on $\rn$ over hyperplanes, meeting the last coordinate axis.

 \end{abstract}

\maketitle


\section{Introduction}

 We consider an integral  transform over hemispheres in the  half-space
$\rn_+ = \{y=(y', y_n): \, y' \in \bbr^{n-1}, \; y_n > 0\}$
 defined by
\be \label {hemi}
(H \vp)(x',r)=\intl_{S_+ (x',r)} \!\!\vp (y)\, d\sig (y),\qquad x' \in \bbr^{n-1}, \quad r>0,\ee
where  $S_+ (x',r)=\{y\in \rn_+: |y-x'|=r\}$, $ n\ge 2$, and  $d\sig (y)$ denotes the induced surface area measure. This integral operator is  known in tomography as  the  {\it sonar transform} (see, e.g., \cite{Be04,  LQ, QRS}), where
the word {\it sonar} is an acronym for “sound navigation and ranging”.   One of the main problems is reconstruction of $\vp$ if $(H \vp)(x',r)$ is known for all $(x',r)$. This problem amounts to mathematical models in acoustic imaging, seismology, thermoacoustic tomography \cite{BM, BK, Che01, No76, No80} and leads to interesting developments in integral geometry and harmonic analysis. Many researchers contributed to the study of the operators (\ref{hemi}) and related spherical means; see, e.g., \cite{An,  D99, Fa, NR, NRT, NT, Pal00, Pal04}, to mention a few; see also survey articles \cite{KK08, KK15}.

 The purpose of this note is to obtain sharp weighted norm estimates and explicit inversion formulas  for $H \vp$.
 Unlike many other publications on this subject, in which  $\vp$ is  smooth and compactly supported away from the boundary, our consideration is focused on
 arbitrary Lebesgue integrable functions $\vp$.

Our approach relies on intimate connection between $H \vp$ and two other Radon-type transforms
\bea \label {ppar}
(P f)(x) &=& \intl_{\bbr^{n-1}} f(x'-y', x_n  -|y'|^2)\, dy', \\
 \label {bart}
(T\psi)(x)&=&  \intl_{\bbr^{n-1}} \psi (y', x'\cdot y' +x_n)\, dy',\eea
which map functions on $\rn$ to functions on $\rn$.
The first one can be called   the {\it  parabolic Radon transform} and 
resembles integration of $f$ over the shifted  paraboloid
\[\pi_x= \pi_0 +x, \qquad \pi_0= \{y=(y', y_n):  y_n= - |y'|^2\}.\]
However, $(P f)(x)$ differs from the usual surface integral
\be \label {papar}
\intl_{\pi_x} \!f(y)\, d\sig (y)=\intl_{\bbr^{n-1}} f(x'-y', x_n  -|y'|^2) \, (1+4|y'|^2)^{1/2} dy'\ee
 by the Jacobian factor, which is suppressed in our consideration.
 The connection between $H$ and $P$ was indicated in \cite{Be09, BK, NR, No80}, though the use of this connection was different from ours.

The  Radon  transform  (\ref{bart})  differs from the most common one, as, e.g., in \cite[Chapter I, Section 2]{H11}; see (\ref{RRT}) below.
 It  resembles  affine Radon transforms in Gelfand’s school \cite{GGG, GGV} and the parametric
Radon transforms in \cite{E}.   Following Strichartz \cite{Str},
who used it in the context of the Heisenberg group  (see also  \cite{Ru12}, \cite [Section 4.13] {Ru15}),  we call
 (\ref{bart})  the {\it  transversal Radon transform}, taking into account that
it  integrates functions over only  those hyperplanes
 which meet the last coordinate axis. Both transforms (\ref{ppar}) and (\ref{bart}) were applied by Christ \cite{Chr} to obtain
  optimal constants in the corresponding $L^p$-$L^q$ inequalities.

\subsection{Main results} We shall prove the following statement.

\begin{theorem}\label{trs} For the hemispherical transform  (\ref{hemi}), the  inequality
\[\Bigg (\,\intl_{\bbr^{n-1}} \!\!\Big[ \intl_0^\infty \!|(H\vp) (x', r)|^s \, r^{1-s}\, dr\Big ]^{q/s}\!\! dx' \Bigg )^{1/q}\!\! \le c_{p, q, s} \Bigg (\,\intl_{\rn_+} \! |\vp (y)|^p \, y_n^{1-p}\, dy\Bigg )^{1/p} \]
 holds
if and only if
\be\label {isp} 1 \le p < n/(n-1), \qquad q=p^{\prime},\qquad 1/s = 1-n/p'.\ee
In particular, for  $q=s=n+1$ and  $p=(n+1)/n$,
\[\Bigg (\,\intl_{\bbr^{n-1}} \!\! \intl_0^\infty \!|(H\vp) (x', r)|^{n+1} \frac{dr dx'}{r^n} \Bigg )^{1/(n+1)}\!\!\!\! \!\le c_{n} \Bigg (\,\intl_{\rn_+} \! |\vp (y)|^{(n+1)/n} \frac{dy}{y_n^{1/n}}\,\Bigg )^{n/(n+1)}\!\!\!. \]
\end{theorem}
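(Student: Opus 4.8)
The plan is to reduce the weighted $L^p$–$L^q$ inequality for $H$ to a corresponding inequality for the parabolic Radon transform $P$ in \eqref{ppar}, and then further to the transversal Radon transform $T$ in \eqref{bart}, where the sharp estimate is essentially the Christ–Strichartz inequality. First I would record the change of variables linking $H$ and $P$: parametrizing $S_+(x',r)$ by the projection $y'\in\bbr^{n-1}$ with $|y'-x'|<r$, one has $y_n=\sqrt{r^2-|y'-x'|^2}$ and $d\sigma(y)=r(r^2-|y'-x'|^2)^{-1/2}\,dy'$; substituting $t=r^2$ and $\vp(y)=\psi(y',y_n^2)\,y_n^{\,\alpha}$ for a suitable power $\alpha$ turns $(H\vp)(x',r)$, after dividing by $r$, into an expression of the form $\int_{\bbr^{n-1}}\psi(y', t-|y'-x'|^2)\,dy'$, i.e. a parabolic Radon transform $(P\psi)(x',t)$ up to harmless weights. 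The weights $r^{1-s}\,dr$ on the left and $y_n^{1-p}\,dy$ on the right are exactly the ones that make this substitution send one scale-invariant inequality to another, so I would choose the exponents so that the claimed range \eqref{isp} matches the natural range for $P$.

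Next I would connect $P$ to $T$. The paraboloid $y_n=-|y'|^2$ and the hyperplane $y_n=x'\cdot y'+$const are interchanged by a partial Fourier transform in the first $n-1$ variables together with completing the square: writing $\widehat{Pf}$ in terms of $\widehat f$ one sees that $P$ is intertwined with $T$ by the map $f(y',y_n)\mapsto$ (a unitary-type operator acting on the $y'$ variables). Concretely, $(Pf)(x)=\int f(x'-y',x_n-|y'|^2)\,dy'$ and $(T\psi)(x)=\int \psi(y',x'\cdot y'+x_n)\,dy'$ are conjugate by the substitution that replaces integration over the paraboloid by integration over its tangent-hyperplane family; this is precisely the mechanism exploited by Christ \cite{Chr}. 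Consequently the sharp $L^p(\bbr^n)\to L^q(\bbr^n)$ bound for $P$ (with the mixed norm in the last variable as in the statement) is equivalent to the sharp bound for $T$, which on $\rn$ is the transversal Radon transform estimate; I would cite \cite{Chr}, \cite{Str}, \cite{Ru12} for the value of the optimal constant and, in particular, for the endpoint $p=(n+1)/n$, $q=s=n+1$.

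For the necessity of \eqref{isp} I would run the standard homogeneity and dilation argument: the half-space $\rn_+$ carries the parabolic scaling $(y',y_n)\mapsto(\lambda y',\lambda^2 y_n)$ under which $S_+$ maps to $S_+$ and both sides of the inequality are homogeneous of computable degrees; equating the degrees forces the relations $q=p'$ and $1/s=1-n/p'$, and positivity of $1/s$ forces $p<n/(n-1)$, while $p\ge 1$ is needed for the right side to be a genuine $L^p$ norm. For sufficiency one assembles the two reductions above; the mixed-norm (Lorentz-type) structure in $r$ is handled because the substitution $t=r^2$ is bi-Lipschitz away from $0$ and $\infty$ and respects the weight $r^{1-s}\,dr$. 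I expect the main obstacle to be bookkeeping the weights and the mixed norm correctly through the two changes of variables — in particular verifying that the power $\alpha$ in $\vp(y)=\psi(y',y_n^2)y_n^{\alpha}$ is consistent simultaneously with the Jacobian of $y_n\mapsto y_n^2$, with the weight $y_n^{1-p}$, and with the weight $r^{1-s}$ on the image side — rather than the harmonic analysis, which is already available in \cite{Chr}.
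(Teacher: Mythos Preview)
Your overall strategy coincides with the paper's: factor $H$ through $P$ via the substitution $r^2\mapsto x_n$ and the map $(A\vp)(z)=z_n^{-1/2}\vp(z',\sqrt{z_n})$, then factor $P$ through $T$ by completing the square (the paper's Lemma~\ref{swa}: $P=B_2TB_1$ with $(B_1f)(x)=f(x',x_n-|x'|^2)$ and $(B_2F)(x)=F(2x',x_n-|x'|^2)$), and finally invoke the Oberlin--Stein estimate for $T$ (Theorem~\ref{tos}). The weight bookkeeping you worry about is exactly what the paper records in (\ref{reda}) and (\ref{reda1}), and your $\alpha$ is $-1$ (equivalently, $A$ above).

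Two points in your write-up need correction. First, the $P\to T$ link is a pure change of variables in physical space; no partial Fourier transform enters, and the maps $B_1$, $B_2$ are isometries for the relevant (mixed) norms, cf.\ (\ref{iar}). Second, and more seriously, your necessity argument fails as written: the parabolic scaling $(y',y_n)\mapsto(\lambda y',\lambda^2 y_n)$ does \emph{not} map hemispheres to hemispheres (it maps them to half-ellipsoids), so $H$ does not intertwine with it and you cannot equate homogeneity degrees this way. The paper obtains necessity differently and for free: since $Q_1=B_1e_-A$ and $Q_2=A^{-1}r_+B_2$ are bijections identifying the weighted norms $\|\cdot\|_p^\sim$, $\|\cdot\|_{q,s}^\sim$ on the half-space with the unweighted norms $\|\cdot\|_p$, $\|\cdot\|_{q,s}$ on $\rn$ (equations (\ref{reda}), (\ref{reda1})), the inequality for $H$ holds if and only if the inequality for $T$ holds, and the necessity of (\ref{isp}) is already contained in Theorem~\ref{tos}. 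If you prefer a direct scaling argument on $H$, use the isotropic dilation $(y',y_n)\mapsto(\lambda y',\lambda y_n)$ (which \emph{does} preserve hemispheres) together with a second, independent one-parameter family coming from the $Q_1$-conjugate of $x_n\mapsto x_n+c$; a single isotropic dilation yields only one relation among $p,q,s$, not two.
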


Here, as usual, $p'$ is the dual exponent, defined by $1/p +1/p'$=1. An important feature of this theorem  is that it gives precise information about the behavior  of  $H\vp$ both globally and near the boundary, depending on the  behavior  of $\vp$.

We also obtain explicit inversion formulas for the operators $H$ and $P$, which follow from the  known results for $T$; see Theorems \ref {loza},  \ref{invek2},
 \ref{loam},  \ref{invek2m}.

\noindent{\bf Plan of the Paper.}  Section 2 contains notation and auxiliary facts about the transversal Radon transform $T$.
In Section 3 we establish basic connections between the operators $H$, $P$, and $T$,  and obtain inversion formulas for $P$.  Section 4 contains the proof of Theorem \ref{trs} and  inversion formulas for $H$.

\section{Preliminaries}

\subsection{Notation}  In the following, $x\!=\!(x_1, \ldots, x_{n-1}, x_n)\!=\!(x', x_n) \!\in \!\rn$; $\Delta = \partial^2/\partial x_1^2 +\ldots +\partial^2/\partial x_n^2$ is
 the Laplace operator.
The notation $C(\bbr^n)$,  $C^\infty (\bbr^n)$,
and $L^p (\bbr^n)$ for function spaces is standard;  $C_0 (\bbr^n)=\{f\in C(\bbr^n):
\lim\limits_{|x|\to\infty} f(x) = 0\}$; $C_c^\infty (\bbr^n)$ is the space of compactly supported infinitely differentiable functions on $\rn$. All integrals are meant as Lebesgue integrals. We say that  an integral under consideration
 exists in the Lebesgue sense if it is finite when the integrand is replaced by its absolute value.  A letter $c$ stands for an inessential positive constant which may vary at each  occurrence.
Given a certain real-valued expression
$X$, and a complex number $\lam$, we set $(X)_\pm^\lam= |X|^\lam$ if $\pm X>0$ and $(X)_{\pm}^\lam=0$, otherwise.

\subsection{The transversal Radon transform}\label{llza}

We recall that the  most familiar form of the Radon transform  \cite{H11} is
\be \label {RRT}  (Rf)(\theta,t)=\intl_{\theta^\perp} f(y+t\theta)\,dy, \qquad (\theta,t) \in  S^{n-1} \times \bbr, \ee
where $S^{n-1}$ is the unit sphere in $\rn$ and $\theta^\perp$ is the hyperplane through the origin orthogonal to $\theta$. Setting $\theta\!=\!(\theta_1,
\ldots,\theta_n)\!=\!(\theta', \theta_n )$ and
\be (\Lam\vp)(\theta, t)=\vp \left (-\frac{\theta'}{\theta_n},\frac{t}{\theta_n}\right),
\ee
we have
 \be \label{45}(Rf)(\theta,
t)=|\theta_n |^{-1} (\Lam T f)(\theta, t), \qquad  \theta_n \neq
0. \ee
  This formula can be inverted and enables one to reformulate all known facts for $R$ in terms of $T$;  see \cite [Section 4.13.5] {Ru15} and \cite [Section 5.2]{Chr} for details.
  For example, consider the mixed norm
  \be\label{daz}
\|F\|_{q, s} \equiv \Bigg (\, \intl_{\bbr^{n-1}} \Big [\intl_\bbr |F (x',x_n)|^s
dx_n\Big]^{q/s}\!\! dx'\Bigg)^{1/q}.\ee
Then the celebrated Oberlin-Stein theorem \cite {OS} for the operator $R$  yields  the following statement for $T$.
\begin{theorem}\label{tos} For $n\ge 2$,
 the inequality
$$
\| T\psi\|_{q, s}  \le c_{p, q, s}  \| \psi \|_p$$ holds
if and only if $p$, $q$, and $s$ satisfy (\ref{isp}).
In particular, for  $q=s=n+1$ and  $p=(n+1)/n$,
\be\label{edxaz}
||T\psi||_{n+1}\le c_n ||\psi||_{(n+1)/n}.\ee
\end{theorem}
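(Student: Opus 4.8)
The plan is to derive Theorem~\ref{tos} from the mixed-norm Oberlin--Stein inequality for the ordinary Radon transform $R$ by means of the substitution formula (\ref{45}); this is precisely the ``reformulation of known facts for $R$ in terms of $T$'' mentioned above, carried out in \cite[Sec.~4.13.5]{Ru15} and \cite[Sec.~5.2]{Chr}. In the form I need, the Oberlin--Stein theorem \cite{OS} states that
\[
\Bigg(\,\intl_{S^{n-1}}\Big[\intl_\bbr |(Rf)(\theta,t)|^s\,dt\Big]^{q/s}\!d\theta\Bigg)^{1/q}\le c_{p,q,s}\,\|f\|_p
\]
holds if and only if $p,q,s$ satisfy (\ref{isp}). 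Since $(Rf)(\theta,t)=(Rf)(-\theta,-t)$ and, by (\ref{45}), $\Lam Tf$ inherits the same evenness, I would work on the upper hemisphere $S^{n-1}_+=\{\theta\in S^{n-1}:\theta_n>0\}$, parametrized by $x'\in\bbr^{n-1}$ through $\theta=\theta(x'):=(-x',1)/(1+|x'|^2)^{1/2}$; then $\theta_n=(1+|x'|^2)^{-1/2}$, $-\theta'/\theta_n=x'$, and $d\theta=(1+|x'|^2)^{-n/2}dx'=\theta_n^{\,n}\,dx'$.

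The core of the argument is a change-of-variables identity. Rewriting (\ref{45}) as $(Tf)\big(-\theta'/\theta_n,\,t/\theta_n\big)=|\theta_n|\,(Rf)(\theta,t)$ and, for $\theta\in S^{n-1}_+$, setting $x_n=t/\theta_n$, one gets $(Tf)(x',x_n)=\theta_n\,(Rf)(\theta(x'),\theta_n x_n)$. For fixed $x'$, the substitution $t=\theta_n x_n$ in the inner integral gives
\[
\intl_\bbr|(Tf)(x',x_n)|^s\,dx_n=\theta_n^{\,s-1}\intl_\bbr|(Rf)(\theta(x'),t)|^s\,dt,
\]
and raising to the power $q/s$, integrating in $x'$, and passing to $dx'=\theta_n^{-n}d\theta$ yields
\[
\|Tf\|_{q,s}^{\,q}=\intl_{S^{n-1}_+}\theta_n^{\,\alpha}\Big[\intl_\bbr|(Rf)(\theta,t)|^s\,dt\Big]^{q/s}d\theta,\qquad \alpha=q\Big(1-\tfrac1s\Big)-n.
\]
Under (\ref{isp}) one has $1/s=1-n/p'=1-n/q$, hence $\alpha=q\cdot(n/q)-n=0$: the weight disappears, and restoring the full sphere by evenness gives $\|Tf\|_{q,s}=2^{-1/q}\|Rf\|_{L^q(S^{n-1},L^s(\bbr))}$. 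The Oberlin--Stein bound then immediately yields $\|T\psi\|_{q,s}\le c_{p,q,s}\|\psi\|_p$, and in particular (\ref{edxaz}) for $p=(n+1)/n$, $q=s=n+1$.

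For the converse, suppose $\|T\psi\|_{q,s}\le c\,\|\psi\|_p$ for all $\psi$. First I would use the two homogeneities of $T$ to pin down the exponents: replacing $\psi(y',y_n)$ by $\psi(ay',y_n)$ and letting $a$ range over $(0,\infty)$ forces $1/p+1/q=1$, i.e.\ $q=p'$; replacing $\psi(y',y_n)$ by $\psi(y',by_n)$ forces $1/s+(n-1)/q=1/p$, which combined with $q=p'$ is exactly $1/s=1-n/p'$. In particular $\alpha=0$, so the identity above together with evenness again gives $\|T\psi\|_{q,s}=2^{-1/q}\|R\psi\|_{L^q(S^{n-1},L^s(\bbr))}$; the necessity half of the Oberlin--Stein theorem then forces in addition $1\le p<n/(n-1)$, so (\ref{isp}) holds. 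I expect the only genuine care to be needed in the two Jacobian computations --- the spherical one $d\theta=\theta_n^{\,n}dx'$ and the one-dimensional $t=\theta_n x_n$ --- and in verifying that the exponent $\alpha$ collapses to $0$ precisely on the line (\ref{isp}) (one should also note that $\|T\psi\|_{q,s}<\infty$ for $\psi\in C_c^\infty(\rn)$ so that the dilation test functions are legitimate); everything else is a direct transcription of the known facts for $R$.
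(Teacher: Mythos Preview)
Your proposal is correct and follows precisely the strategy the paper indicates: reduce to the Oberlin--Stein theorem for $R$ via the substitution formula (\ref{45}), and handle necessity by scaling. The paper itself defers the full argument to \cite{Ru12,Ru15,Chr} and only sketches necessity via a single two-parameter dilation in the unmixed case $q=s$; your version supplies the explicit Jacobian computation showing the weight $\theta_n^{\,\alpha}$ vanishes exactly on the line (\ref{isp}), and replaces the paper's two-parameter scaling by two separate one-parameter dilations, which is an equivalent (and arguably cleaner) way to isolate the constraints $q=p'$ and $1/s=1-n/p'$.
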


The proof of Theorem \ref{tos} can be found in  \cite [Theorems 1.4, 8.2] {Ru12}, \cite[Theorem 4.149] {Ru15}); see also \cite [Lemma 2.2]{Chr}.
The necessity of the bounds for $p$, $q$, and $s$ are inherited from those for $R$. They can also be checked straightforward,
 using the scaling argument. For example, let $\lam =(\lam_1, \lam_2)$, $\lam_1 >0$, $\lam_2 >0$. We denote
\[
(A_\lam\psi)(x)=\psi (\lam_1 x', \lam_2 x_n), \qquad (B_\lam \Psi)(x)=\lam_1^{1-n} \Psi\left (\frac{\lam_2}{\lam_1}\, x', \lam_2 x_n\right ), \]
so that $T A_\lam \psi = B_\lam T \psi$. Then
\[ ||A_\lam \psi||_p=\lam_1^{(1-n)/p}\lam_2^{-1/p} ||\psi||_p, \quad ||B_\lam \Psi||_q=\lam_1^{1-n+(n-1)/q}\lam_2^{-n/q} ||\Psi||_q.\]
If $|| T \psi||_q \le c \,||\psi||_p$ is true for all $\psi\in L^p$, then it is true for $A_\lam \psi$, that is, $||T A_\lam \psi||_{q}\le c \,||A_\lam \psi||_{p}$, or
$||B_\lam T \psi||_{q}\le c \,||A_\lam \psi||_{p}$.
 The latter is equivalent to
\[
\lam_1^{1-n+(n-1)/q}\lam_2^{-n/q} ||T\psi||_q   \le c \,\lam_1^{(1-n)/p}\lam_2^{-1/p} ||\psi||_p.\]
Letting $\lam_1 $ and $\lam_2 $ tend to zero and to infinity, we conclude that the last inequality is possible only if
$p=(n+1)/n$ and $ q=n+1$.

A variety of inversion formulas for the transversal Radon transform
can be found in \cite [Section 4] {Ru12}; see also \cite [Section 4.13] {Ru15}. For example, the following statement holds.

\begin {theorem}\label {loa}
A function $\psi\in L^p (\rn)$, $1\le p < n/(n-1)$, can be reconstructed from $\Psi=T\psi$ by the formula
\be\label{for1} \psi(x) \! = \!
\frac{1}{d_{n,\ell}(n\!-\!1)}\intl_{\bbr^n} \!\! \frac{(\Del^\ell_y
g)(x)}{|y|^{2n-1}}\, dy, \ee where
\[
d_{n,\ell}(n\!-\!1) \! = \! \intl_{\bbr^n}\!\!
\frac{(1 \! - \! e^{iy_1})^\ell}{|y|^{2n-1}}\, dy
\quad \text{($y_1$ is the first coordinate of $y $)}, \]
\be\label {ggg} g (x)=(2\pi)^{1-n} \intl_{\bbr^{n-1}} \Psi (y', x_n -y' \cdot x')\, \frac{dy'}{(1+|y'|^2)^{(n-1)/2}},\ee
\[ (\Del^\ell_y g)(x)=\sum_{j=0}^\ell {\ell \choose j} (-1)^j g(x-jy).\]
 Here $\ell=n-1$ if $n$ is even, and $\ell>n-1$ is arbitrary if $n$ is odd. The integral in (\ref {for1}) is understood as a limit $\lim\limits_{\e \to
0}\int_{|y|>\e}$. This limit exists in the $L^p$-norm and in the
a.e. sense. For $\psi\in C_0 (\rn)\cap L^p (\rn)$,  it exists in the $\sup$-norm.
\end{theorem}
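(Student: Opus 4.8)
The plan is to reduce the inversion of $T$ to inversion of a Riesz potential, and then to invert that potential by a hypersingular integral. \textbf{Step 1: identify $g$ as a Riesz potential of $\psi$.} Substituting $\Psi=T\psi$ into (\ref{ggg}) and unfolding the definition (\ref{bart}), one has (after the shift $z'\mapsto z'-x'$) $\Psi(y',x_n-y'\cdot x')=\int_{\bbr^{n-1}}\psi(x'+w',x_n+y'\cdot w')\,dw'$, and interchanging the order of integration is legitimate because for $\psi\in L^p(\rn)$ with $1\le p<n/(n-1)$ the potential $\int_{\rn}|\psi(u)|\,|x-u|^{-1}\,du=(I^{n-1}|\psi|)(x)$ is finite for a.e.\ $x$. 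Fixing $w'\neq 0$ and splitting $y'$ along $w'$ and its orthocomplement $\cong\bbr^{n-2}$, the inner integration of $(1+|y'|^2)^{-(n-1)/2}$ over the fibre $\{y':y'\cdot w'=t\}$ is an elementary beta-type integral equal to a constant times $|w'|\,(|w'|^2+t^2)^{-1/2}$; collecting the Jacobians one is left with
\[ g(x)=\intl_{\rn}\frac{\psi(u)}{|x-u|}\,du=(I^{n-1}\psi)(x), \]
the Riesz potential of order $n-1$ in the standard normalization. The factor $(2\pi)^{1-n}$ in (\ref{ggg}) is exactly what makes the constant here equal to $1$.

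\textbf{Step 2: invert $I^{n-1}$.} On $L^p(\rn)$, $1\le p<n/(n-1)$, the operator $I^{n-1}$ is injective, with inverse $(-\Del)^{(n-1)/2}$, and the latter is realized by the hypersingular integral $d_{n,\ell}(n-1)^{-1}\lim_{\e\to0}\int_{|y|>\e}(\Del^\ell_y g)(x)\,|y|^{1-2n}\,dy$. The normalizing constant is pinned down by Fourier analysis: $\Del^\ell_y$ turns into multiplication by $(1-e^{-iy\cdot\xi})^\ell$, and $\int_{\rn}(1-e^{-iy\cdot\xi})^\ell|y|^{1-2n}\,dy$ is rotation-invariant and homogeneous of degree $n-1$ in $\xi$, hence equals $|\xi|^{n-1}\,d_{n,\ell}(n-1)$ with $d_{n,\ell}(n-1)$ its value at $\xi=e_1$, i.e.\ the integral displayed in the statement (the substitution $y\mapsto -y$ accounts for the sign in $e^{iy_1}$). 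Composing with Step 1 gives $\widehat{D_\e g}(\xi)\to\widehat\psi(\xi)$, which is (\ref{for1}). The stated restriction on $\ell$ is precisely the condition under which $d_{n,\ell}(n-1)$ is finite and nonzero: when $\ell$ is odd the leading term of $(1-e^{iy_1})^\ell$ near the origin is annihilated by the symmetry $y_1\mapsto -y_1$, which is why $\ell=n-1$ already suffices when $n$ is even, whereas one extra order of differencing is needed when $n$ is odd.

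\textbf{Step 3: the modes of convergence.} Writing $D_\e g$ for the truncated integral and using $g=I^{n-1}\psi$, one shows that $D_\e g=\psi*\eta_\e$, where $\eta_\e(x)=\e^{-n}\eta_1(x/\e)$ is an approximate identity ($\eta_1\in L^1(\rn)$, $\int_{\rn}\eta_1(x)\,dx=1$). This yields $\|D_\e g-\psi\|_p\to 0$; the a.e.\ convergence follows from the maximal estimate $\sup_\e|D_\e g|\le c\,(M\psi)$, with $M$ the Hardy--Littlewood maximal operator, together with density of smooth functions; and for $\psi\in C_0(\rn)\cap L^p(\rn)$ the convolution $\psi*\eta_\e$ tends to $\psi$ uniformly.

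\textbf{Main obstacle.} Steps 1 and 2 are in essence a careful bookkeeping of explicit constants; the genuinely technical part is Step 3 --- verifying that the regularized hypersingular integral is a bona fide approximate identity applied to $\psi$ and is dominated by the maximal function throughout the whole range $1\le p<n/(n-1)$, including the endpoint $p=1$. This is exactly where I would lean on the calculus of hypersingular integrals developed in \cite{Ru12,Ru15}. An alternative to Step 1 is to invoke the relation (\ref{45}) between $T$ and the classical Radon transform $R$ and to quote a known inversion formula for $R$ directly, but the direct computation above is cleaner and produces the constant with no change-of-parametrization bookkeeping.
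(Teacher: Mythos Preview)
The paper does not actually prove this theorem: it is quoted verbatim as a known preliminary result from \cite[Section~4]{Ru12} and \cite[Section~4.13]{Ru15}, with no argument given in the present paper. Your three-step outline --- identify $g$ as the Riesz potential $I^{n-1}\psi$, invert $I^{n-1}$ by a Marchaud-type hypersingular integral with $\ell$th differences, and handle convergence via the approximate-identity/maximal-function machinery --- is precisely the strategy carried out in those references, so your sketch is faithful to the original source rather than an alternative route.

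One small slip in Step~1: the constant is not $1$. Running your beta-integral calculation carefully gives
\[
g(x)=\frac{1}{\gamma_n(n-1)}\intl_{\rn}\frac{\psi(u)}{|x-u|}\,du,\qquad \gamma_n(n-1)=2^{\,n-1}\pi^{(n-1)/2}\,\Gamma\!\Big(\frac{n-1}{2}\Big),
\]
which \emph{is} $I^{n-1}\psi$ in the standard Fourier normalization $\widehat{I^{n-1}\psi}(\xi)=|\xi|^{-(n-1)}\widehat\psi(\xi)$, but not the bare integral you display. This is harmless for the logic --- your Step~2 Fourier computation then matches exactly and the constant $d_{n,\ell}(n-1)$ is the right one --- but the sentence ``the constant here equal to $1$'' should be dropped. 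The parity discussion of $\ell$ and the convergence claims in Step~3 are correct in spirit and are exactly the points that \cite{Ru12,Ru15} supply in full detail.
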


An alternative inversion formula can be obtained in terms of powers of the minus Laplacian operator.
We follow \cite[Theorem 4.5]{Ru12} (or \cite[Theorem 4.132]{Ru15}), assuming for simplicity $\psi$ to be smooth and
compactly supported (this assumption is weakened in   \cite[Subsection 4.1.2]{Ru12} in terms of Lipschitz functions with prescribed decay at infinity).

\begin{theorem}  \label{ine2} Let  $\psi\in C_c^\infty (\rn)$,  $\Psi=T\psi$, and let $g$ be defined by (\ref{ggg}).
If $n$ is odd,  then
 \be \label{pf1}\psi(x)= (-\Del)^{(n-1)/2}\,g(x).\ee
 If $n$ is even,  then
  \be \label{pf2} \psi(x)\!=\! c_n \intl_{\bbr^n}\!
\frac{(-\Del)^{n/2-1}g (x)\!-\!(-\Del)^{{n/2-1}}g (x\!-\!y)}{|y|^{n+1}} \,
dy , \ee
where $c_n=\Gam ((n\!+\!1)/2)/\pi^{(n+1)/2}$ and  $\int_{\bbr^n}=\lim\limits_{\e \to 0}\int_{|y|>\e}$
uniformly in $x \in \bbr^n$.
\end{theorem}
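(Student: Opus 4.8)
The plan is to reduce everything to a single Fourier-multiplier identity. First I would show that when the input of the backprojection (\ref{ggg}) is $\Psi=T\psi$, the resulting function $g$ is exactly the Riesz potential of $\psi$ of order $n-1$, i.e.
\[
\widehat{g}(\xi)=|\xi|^{-(n-1)}\,\widehat{\psi}(\xi),\qquad \xi\in\rn,
\]
where $\widehat{u}(\xi)=\intl_{\rn}u(x)\,e^{-ix\cdot\xi}\,dx$. Granting this, both (\ref{pf1}) and (\ref{pf2}) follow at once: the operator $(-\Del)^{(n-1)/2}$ has Fourier multiplier $|\xi|^{n-1}$, so $(-\Del)^{(n-1)/2}g$ has Fourier transform $\widehat{\psi}$ and therefore equals $\psi$. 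Thus the whole theorem rests on the multiplier identity plus the classical realization of $(-\Del)^{(n-1)/2}$.

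To prove the identity I would substitute the definition (\ref{bart}) of $T$ into (\ref{ggg}). Writing the inner argument as $s=x_n-y'\cdot x'$ and shifting the integration variable gives
\[
(T\psi)(y',x_n-y'\cdot x')=\intl_{\bbr^{n-1}}\psi\big(z',\,y'\cdot(z'-x')+x_n\big)\,dz'.
\]
Inserting the Fourier representation of $\psi$ and integrating in $z'$ produces a factor $(2\pi)^{n-1}\delta(\xi'+y'\xi_n)$, which eliminates the $\xi'$-integration and forces $\xi'=-y'\xi_n$. For fixed $\xi_n\neq0$ I then change variables $\xi'=-y'\xi_n$, i.e.\ $y'=-\xi'/\xi_n$, with Jacobian $dy'=|\xi_n|^{-(n-1)}\,d\xi'$. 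The crucial cancellation is that the weight in (\ref{ggg}) becomes
\[
(1+|y'|^2)^{(n-1)/2}=\big(1+|\xi'|^2/\xi_n^2\big)^{(n-1)/2}=|\xi|^{\,n-1}/|\xi_n|^{\,n-1},
\]
so that the factor $|\xi_n|^{n-1}$ exactly cancels the Jacobian $|\xi_n|^{-(n-1)}$, while the prefactor $(2\pi)^{1-n}$ combines with the $\delta$-normalization to reconstitute the $(2\pi)^{-n}$ of the $n$-dimensional Fourier inversion. What remains is $g(x)=(2\pi)^{-n}\intl_{\rn}|\xi|^{-(n-1)}\widehat{\psi}(\xi)\,e^{ix\cdot\xi}\,d\xi$, which is the asserted identity.

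It then remains to invert the Riesz potential. When $n$ is odd, $(n-1)/2$ is a nonnegative integer, so $(-\Del)^{(n-1)/2}$ is a genuine differential operator and (\ref{pf1}) is immediate. When $n$ is even I factor $(-\Del)^{(n-1)/2}=(-\Del)^{1/2}(-\Del)^{n/2-1}$: the integer power $(-\Del)^{n/2-1}$ is applied directly, and the remaining $(-\Del)^{1/2}$ is represented by the standard first-difference hypersingular integral for the fractional Laplacian,
\[
(-\Del)^{1/2}h(x)=c_n\intl_{\rn}\frac{h(x)-h(x-y)}{|y|^{n+1}}\,dy,\qquad c_n=\frac{\Gam((n+1)/2)}{\pi^{(n+1)/2}},
\]
whose normalizing constant $c_{n,1/2}=4^{1/2}\Gam((n+1)/2)/(\pi^{n/2}|\Gam(-1/2)|)$ is precisely the $c_n$ in the statement. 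Applying this with $h=(-\Del)^{n/2-1}g$ yields (\ref{pf2}).

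The hard part is not the formal mechanics but its rigorous justification: the $\delta$-function and the change of variables $y'=-\xi'/\xi_n$ are singular at $\xi_n=0$, and the interchange of the $z'$-, $y'$-, and $\xi$-integrations must be controlled. Here I would exploit that $\psi\in C_c^\infty(\rn)$ forces $\widehat{\psi}$ to be rapidly decreasing (indeed entire of exponential type by Paley--Wiener), so $|\xi|^{-(n-1)}\widehat{\psi}(\xi)$ is integrable both near the origin (since $n-1<n$) and at infinity. This guarantees that $g$ is a well-defined smooth function decaying like $O(|x|^{-1})$, that $h=(-\Del)^{n/2-1}g$ decays and is smooth enough for the hypersingular integral to converge absolutely after the principal-value cancellation of the linear term, and that every step can be legitimized by regularization or by testing against Schwartz functions, as carried out in \cite[Section 4]{Ru12} and \cite[Section 4.13]{Ru15}.
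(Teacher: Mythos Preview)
Your argument is correct: the key identity $\widehat g(\xi)=|\xi|^{-(n-1)}\widehat\psi(\xi)$ does follow from the substitution and change of variables you describe, and the constant $c_n$ you compute for the hypersingular realization of $(-\Del)^{1/2}$ matches the one in the statement.  Note, however, that the paper does not actually prove Theorem~\ref{ine2}; it is quoted as a known preliminary fact from \cite[Theorem 4.5]{Ru12} (equivalently \cite[Theorem 4.132]{Ru15}).  In those references the same identity $g=I^{\,n-1}\psi$ is obtained, but via the change of variables (\ref{45}) linking $T$ to the classical Radon transform $R$ and the standard relation $R^{*}R=\text{const}\cdot I^{\,n-1}$, rather than by the direct Fourier substitution you carry out.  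The two routes are equivalent; yours is slightly more self-contained, while the cited one makes the dependence on classical Radon theory explicit.
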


\section {Basic Connections}

In this section we establish basic connections
 between the hemispherical transform $H$, the parabolic transform $P$, and
the transversal Radon transform $T$.

\subsection {Connection between $H$ and $P$}
By  the classical Calculus,
\bea
&&(H\vp)(x',r)=r\intl_{|y'|<r} \vp(x'-y', \sqrt {r^2 -|y'|^2})\, \frac{dy'}{\sqrt {r^2 -|y'|^2}}\nonumber\\
&&
=r\intl_{|y'|<r} f (x'-y', r^2 -|y'|^2)\, dy', \qquad f(z)= z_n^{-1/2} \vp (z', \sqrt {z_n}); \quad z\in \rn_+.\nonumber\eea
Replace  $r^2$ by $x_n>0$ to get
$(H\vp)(x',\sqrt {x_n})=\sqrt {x_n}\,(P_0 f)(x)$, where
\be \label {par}
(P_0 f)(x) = \intl_{|y'|<\sqrt {x_n}} f(x'-y', x_n  -|y'|^2)\, dy', \qquad x \in \rn_+.\ee
Setting
\bea
\label{aaa}(A\vp)(z) &\equiv& (A\vp)(z', z_n)= z_n^{-1/2} \vp (z', \sqrt {z_n}),  \quad z\in \rn_+,\\
\label{aaa1}(A^{-1} \Phi)(x)&\equiv&  (A^{-1} \Phi)(x', x_n)=x_n \Phi(x', x_n^2), \quad x\in \rn_+,\eea
so that $A^{-1} A\vp=\vp$, $AA^{-1}\Phi=\Phi$,
 we  obtain
\be \label {conn} (H\vp)(x', r)= (A^{-1} P_0 A\vp)(x', r).\ee
The next step is to extend (\ref{par})  to functions on  $\rn$.
Given a function $f^+$ on $\rn_+$, we denote by $e_{-}f^+$  its extension by zero onto the entire space $\rn$, and let $r_+$ be the restriction map,
which assigns to a function on $\rn$ its restriction onto $\rn_+$.  Clearly,
\be \label {bar}
(P_0 f^+)(x) = (r_+ P e_{-} f^+)(x), \qquad x\in \rn_+.\ee
Hence (\ref{conn}) gives  the following statement.
\begin{lemma} \label{ii7y} The equality
\be\label {iaiw}
H\vp=A^{-1}r_+ P e_{-} A\vp\ee
holds provided that either side of it exists in the Lebesgue sense.
\end{lemma}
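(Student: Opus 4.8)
The plan is to assemble the identity \eqref{iaiw} by composing the three elementary relations already derived in this subsection, and then to justify that the chain of equalities is valid under the single hypothesis that one side exists in the Lebesgue sense. First I would recall the definition \eqref{aaa} of $A$, so that for $\vp$ on $\rn_+$ the function $f=A\vp$ satisfies $f(z)=z_n^{-1/2}\vp(z',\sqrt{z_n})$; this is exactly the substitution made at the start of the subsection. Next, $e_{-}A\vp$ is the extension of $f$ by zero to all of $\rn$, so that $P e_{-}A\vp$ is defined on $\rn$, and its restriction $r_+ P e_{-}A\vp$ to $\rn_+$ equals $P_0 A\vp$ by \eqref{bar} (with $f^+=A\vp$). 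Finally, applying $A^{-1}$ and using \eqref{aaa1}, \eqref{par}, together with the computation $(H\vp)(x',\sqrt{x_n})=\sqrt{x_n}\,(P_0 f)(x)$ preceding \eqref{par}, gives $(A^{-1}P_0 A\vp)(x',r)=(H\vp)(x',r)$, which is \eqref{conn}. Chaining these, $A^{-1}r_+ P e_{-}A\vp = A^{-1}P_0 A\vp = H\vp$.

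The one point requiring care is the equivalence of the existence conditions, i.e. the phrase ``provided that either side exists in the Lebesgue sense.'' I would argue as follows. Each of the maps $A$, $A^{-1}$, $e_{-}$, $r_+$ is a pointwise operation that acts by a nonnegative measurable weight and/or a change of variable, and these operations are mutually inverse in the obvious places ($A^{-1}A=\id$, $AA^{-1}=\id$, $r_+e_{-}=\id$ on $\rn_+$). Consequently, replacing $\vp$ by $|\vp|$ throughout turns every equality in the chain into an equality of nonnegative (possibly $+\infty$) quantities: the substitution $z_n\mapsto\sqrt{z_n}$ in \eqref{aaa} is a $C^1$ diffeomorphism of $(0,\infty)$, so the integral defining $(H\vp)(x',r)$ converges absolutely for a.e. $(x',r)$ if and only if the integral defining $(P_0|A\vp|)(x)$ converges absolutely for a.e. $x\in\rn_+$; and since $e_{-}$ only adjoins a set on which the integrand vanishes, absolute convergence of $P_0|A\vp|$ on $\rn_+$ is equivalent to absolute convergence of $P|e_{-}A\vp|$ on $\rn_+$. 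Thus the left side of \eqref{iaiw} exists in the Lebesgue sense precisely when the right side does, and when they exist the pointwise chain of equalities above holds for a.e. $(x',r)$.

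I do not expect a genuine obstacle here: the lemma is a bookkeeping statement packaging the substitutions $r^2=x_n$ and $z_n=y_n^2$ together with the trivial zero-extension device. The only place where one must be slightly attentive is that $A$ and $A^{-1}$ involve the weights $y_n^{-1/2}$ and $x_n$, which blow up or vanish at the boundary $\{y_n=0\}$; but since this is a set of measure zero in $\rn_+$ and the hemispheres $S_+(x',r)$ and paraboloids $\pi_x$ meet it only in lower-dimensional sets, the weights cause no loss or gain of integrability. Hence the equivalence of Lebesgue-sense existence on the two sides, and therefore the identity \eqref{iaiw}, follows directly from \eqref{conn} and \eqref{bar}.
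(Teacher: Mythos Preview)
Your proposal is correct and follows exactly the paper's approach: the lemma is stated immediately after the derivations \eqref{conn} and \eqref{bar} with the words ``Hence (\ref{conn}) gives the following statement,'' and no further proof is given. Your additional remarks on the equivalence of the Lebesgue-existence conditions make explicit a point the paper leaves implicit, but the underlying argument is the same chain $H\vp = A^{-1}P_0 A\vp = A^{-1} r_+ P e_- A\vp$.
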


\subsection {Connection between  $P$ and  $T$.}\label {99jh}

This connection can be found in \cite [Lemma 2.3]{Chr}. In view of its importance and for the sake of completeness, 
we present it in detail, using our notation. For $x \in \rn$, let
\be \label {bars}
(B_1f)(x)=f(x', x_n \!-\!|x'|^2), \; (B_2 F)(x)=F(2x',  x_n \!-\!|x'|^2).\ee
The corresponding inverse maps have the form
\be \label {bars1} (B_1^{-1}u)(x) \! = \!u(x', x_n +|x'|^2), \qquad (B_2^{-1} v)(x)\!=\!v \left (\frac{x'}{2},  x_n +\frac{|x'|^2}{4}\right ).\ee
One can readily see that
\be\label {iar} ||B_1f||_p =||f||_p, \qquad \| B_2 F \|_{q, s} = 2^{(1-n)/q} \,\|F \|_{q, s}.\ee

\begin{lemma}\label {swa} The equality
\be \label {bts}  Pf=B_2TB_1 f,\ee
holds provided that either side of it exists in the Lebesgue sense.
\end{lemma}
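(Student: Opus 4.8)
The plan is to verify (\ref{bts}) by a direct substitution: compose the three pointwise maps $B_1$, $T$, $B_2$ in order, and then reduce the resulting integral to $(Pf)(x)$ by completing a square and translating the variable of integration.

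First I would peel off the composition from the inside. Writing $g=B_1f$, so that $g(z', z_n)=f(z', z_n-|z'|^2)$ by (\ref{bars}), the definition (\ref{bart}) of the transversal Radon transform gives
\[ (Tg)(x)=\intl_{\bbr^{n-1}} g(y', x'\cdot y'+x_n)\, dy'=\intl_{\bbr^{n-1}} f\big(y',\, x'\cdot y'+x_n-|y'|^2\big)\, dy'. \]
Applying $B_2$, that is, replacing $x'$ by $2x'$ and $x_n$ by $x_n-|x'|^2$, I obtain
\[ (B_2TB_1f)(x)=\intl_{\bbr^{n-1}} f\big(y',\, 2x'\cdot y'+x_n-|x'|^2-|y'|^2\big)\, dy'. \]

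The second step is the only algebraic point: since $2x'\cdot y'-|x'|^2-|y'|^2=-|x'-y'|^2$, the integrand becomes $f(y', x_n-|x'-y'|^2)$, and the change of variables $y'\mapsto x'-y'$ (which has Jacobian $1$ and maps $\bbr^{n-1}$ onto itself) turns the integral into $\intl_{\bbr^{n-1}} f(x'-y', x_n-|y'|^2)\, dy'=(Pf)(x)$, which is (\ref{ppar}). This gives (\ref{bts}) as a pointwise identity.

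Finally, for the qualification ``provided that either side exists in the Lebesgue sense,'' I would note that every manipulation above is an identification of integrands followed by the measure-preserving substitution $y'\mapsto x'-y'$; hence carrying the same steps out with $|f|$ in place of $f$ shows that $\intl_{\bbr^{n-1}} |f(x'-y', x_n-|y'|^2)|\, dy'$ and $\intl_{\bbr^{n-1}} |f(y', 2x'\cdot y'+x_n-|x'|^2-|y'|^2)|\, dy'$ are equal, so finiteness of one side of (\ref{bts}) forces finiteness of the other, and then the two coincide. There is no genuine obstacle in this argument; the only things to watch are the bookkeeping in the $B_2$-substitution --- it rescales $x'$ but leaves $y'$ untouched, so it must be applied \emph{before} the translation in $y'$ --- and the sign in the completion of the square, which I would sanity-check by specializing to $x'=0$.
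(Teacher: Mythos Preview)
Your proof is correct and takes essentially the same approach as the paper: a direct computation using the completion-of-the-square identity $2x'\cdot y'-|x'|^2-|y'|^2=-|x'-y'|^2$ together with the translation $y'\mapsto x'-y'$. The only cosmetic difference is direction---the paper applies $B_2^{-1}$ to $Pf$ and matches the result with $TB_1f$, whereas you apply $B_2$ to $TB_1f$ and match with $Pf$---but the algebraic content is identical.
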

\begin{proof} We write the left-hand side as
\[
(P f)(x) \!= \!\intl_{\bbr^{n-1}}\!\! f(y', x_n \! -\!|x'\!-\!y'|^2)\, dy'\!=\!\intl_{\bbr^{n-1}} \!\!f(y', x_n \! -\!|x'|^2 \!-\!|y'|^2 +2 x'\cdot y')\, dy'.\]
Hence
\[
(B_2^{-1}P f)(x)= (P f)\!\left (\frac{x'}{2},  x_n +\frac{|x'|^2}{4}\right )=\intl_{\bbr^{n-1}}\! \!  f (y', x_n -|y'|^2+ x'\cdot y')\, dy'.\]
On the other hand,
\[
(TB_1f)(x)=  \intl_{\bbr^{n-1}} \!\!  (B_1f)(y', x'\cdot y' +x_n)\, dy'=\intl_{\bbr^{n-1}}\! \!  f (y', x_n -|y'|^2+ x'\cdot y')\, dy',\]
as above. This gives the result.
\end{proof}

\begin{theorem}\label{tosq}
For $n\ge 2$,
 the  inequality
$$
\| Pf\|_{q, s} \le c_{p, q, s}  \| f \|_p$$ holds
if and only if $p$, $q$, and $s$ satisfy (\ref{isp}).
In particular, for  $q=s=n+1$ and $p=(n+1)/n$,
\be\label{edxaz1}
||Pf||_{n+1}\le c_n \,||f||_{(n+1)/n}.\ee
\end{theorem}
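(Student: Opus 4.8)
The plan is to deduce Theorem~\ref{tosq} directly from Theorem~\ref{tos} using the factorization $Pf=B_2TB_1f$ established in Lemma~\ref{swa}, together with the norm identities in (\ref{iar}). This is the natural route because the operators $B_1$ and $B_2$ are measure-preserving up to a harmless constant, so they transport $L^p$ and mixed-norm bounds back and forth without loss.

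First I would prove the ``if'' direction. Assume $p$, $q$, $s$ satisfy (\ref{isp}). Given $f\in L^p(\rn)$, set $\psi=B_1 f$; by the first identity in (\ref{iar}), $\|\psi\|_p=\|f\|_p$, so $\psi\in L^p(\rn)$ and Theorem~\ref{tos} applies, giving $\|T\psi\|_{q,s}\le c_{p,q,s}\|\psi\|_p$. Then by Lemma~\ref{swa}, $Pf=B_2 T\psi$, and the second identity in (\ref{iar}) yields $\|Pf\|_{q,s}=\|B_2 T\psi\|_{q,s}=2^{(1-n)/q}\|T\psi\|_{q,s}\le 2^{(1-n)/q}c_{p,q,s}\|f\|_p$. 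One should check that the Lebesgue-existence hypothesis in Lemma~\ref{swa} is met: since $f\in L^p$ and the change of variables defining $B_1$ preserves the $L^p$-norm, $T\psi$ is finite a.e., and chasing this through $B_2$ shows $Pf$ is finite a.e., which legitimizes the chain of equalities. Specializing to $q=s=n+1$, $p=(n+1)/n$ gives (\ref{edxaz1}) from (\ref{edxaz}).

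For the ``only if'' direction, I would run the same factorization in reverse: if $\|Pf\|_{q,s}\le c\|f\|_p$ holds for all $f\in L^p$, then writing $f=B_1^{-1}\psi$ (so $\psi=B_1 f$ ranges over all of $L^p$ as $f$ does, with equal norms) and using $Pf=B_2T\psi$ forces $\|B_2 T\psi\|_{q,s}\le c\|\psi\|_p$, i.e. $\|T\psi\|_{q,s}\le 2^{(n-1)/q}c\|\psi\|_p$ for all $\psi\in L^p$; by the necessity half of Theorem~\ref{tos}, $p,q,s$ must satisfy (\ref{isp}). Alternatively, and perhaps more cleanly, one can invoke the scaling argument sketched in the text after Theorem~\ref{tos}: $P$ intertwines the anisotropic dilations in a way parallel to $T$, and letting the two dilation parameters tend to $0$ and $\infty$ pins down the exponents. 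Either argument suffices.

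The main obstacle is essentially bookkeeping rather than analysis: one must be careful that the operators $B_1$, $B_2$, $B_1^{-1}$, $B_2^{-1}$ are genuine bijections on the relevant function classes and that the equality $Pf=B_2TB_1f$ is not merely formal but holds in the Lebesgue sense whenever $f\in L^p$, so that the norm computations are valid. Since (\ref{iar}) records exactly the needed isometry (up to the constant $2^{(1-n)/q}$) and Lemma~\ref{swa} already carries the Lebesgue-existence caveat, there is no real difficulty; the proof is a short three-line reduction in each direction.
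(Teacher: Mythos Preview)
Your proposal is correct and matches the paper's approach exactly: the paper states that Theorem~\ref{tosq} ``follows immediately from Lemma~\ref{swa} and (\ref{iar}),'' which is precisely the factorization $Pf=B_2TB_1f$ combined with the norm identities you invoke to transfer Theorem~\ref{tos} to $P$. Your write-up simply supplies the details (including the necessity direction) that the paper leaves implicit.
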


Theorem \ref{tosq} follows immediately from
Lemma \ref{swa} and (\ref{iar}). The inequality (\ref{edxaz1}) can also be found in \cite [formula (1.10)]{Chr}.

Another important consequence of Lemma \ref{swa} is a series of explicit inversion formulas for the parabolic Radon transform $P$.
 Specifically, let  $Pf=F$. Then, by (\ref{bts}), $f= B_1^{-1} T^{-1} B_2^{-1}F$, or
\[
f=B_1^{-1} \psi, \qquad \psi =T^{-1} \Psi, \qquad \Psi= B_2^{-1}F.\]
By (\ref{bars1}),
\[(B_1^{-1} \psi)(x)  =\psi(x', x_n +|x'|^2), \qquad
(B_2^{-1} F)(x)  = F\left (\frac{x'}{2},  x_n +\frac{|x'|^2}{4}\right ),\]
and $\psi =T^{-1} \Psi$ can be computed using the results from Subsection \ref{llza}. Hence
 Theorem \ref{loa}  gives the following statement.

\begin {theorem}\label {loza}
A function $f\in L^p (\rn)$, $1\le p < n/(n-1)$, can be reconstructed from $F=Pf$ by the formula
\be\label{fvv}
f(x) \! = \!
\frac{1}{d_{n,\ell}(n\!-\!1)}\intl_{\bbr^n} \!\! \frac{(\Del^\ell_y
g)(x',x_n +|x'|^2)}{|y|^{2n-1}}\, dy,\ee
where the right-hand side has the same meaning as in (\ref{for1}) but with
\be\label{fvv1s}
g (x)=\pi^{1-n} \intl_{\bbr^{n-1}} F (z', x_n - 2z' \cdot x' + |z'|^2 )\, \frac{dz'}{(1+ 4|z'|^2)^{(n-1)/2}}.\ee
\end{theorem}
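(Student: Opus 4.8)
The plan is to derive Theorem \ref{loza} as a direct corollary of Theorem \ref{loa} and Lemma \ref{swa}, simply by tracking the change-of-variables maps $B_1$ and $B_2$. First I would record that $F=Pf$ means, by (\ref{bts}), $F=B_2 T B_1 f$, hence $f=B_1^{-1}T^{-1}B_2^{-1}F$. So set $\Psi = B_2^{-1}F$ and $\psi=T^{-1}\Psi$; then $f(x)=(B_1^{-1}\psi)(x)=\psi(x',x_n+|x'|^2)$ by the first formula in (\ref{bars1}). Everything reduces to: apply Theorem \ref{loa} to $\Psi=B_2^{-1}F$ to reconstruct $\psi$, then precompose with $B_1^{-1}$.

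The one substantive computation is to simplify the auxiliary function $g$ from (\ref{ggg}). In Theorem \ref{loa}, $g$ is built from $\Psi$ via $g(x)=(2\pi)^{1-n}\int_{\bbr^{n-1}}\Psi(y',x_n-y'\cdot x')(1+|y'|^2)^{-(n-1)/2}\,dy'$. Substituting $\Psi=B_2^{-1}F$, i.e. $\Psi(u)=F(u'/2,\,u_n+|u'|^2/4)$, gives $g(x)=(2\pi)^{1-n}\int_{\bbr^{n-1}}F\big(y'/2,\,x_n-y'\cdot x'+|y'|^2/4\big)(1+|y'|^2)^{-(n-1)/2}\,dy'$. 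The change of variable $z'=y'/2$ (so $dy'=2^{n-1}dz'$, $1+|y'|^2=1+4|z'|^2$, and $x_n-y'\cdot x'+|y'|^2/4 = x_n-2z'\cdot x'+|z'|^2$) turns the constant $(2\pi)^{1-n}2^{n-1}$ into $\pi^{1-n}$ and produces exactly (\ref{fvv1s}). Then Theorem \ref{loa}'s formula (\ref{for1}) reconstructs $\psi(x)$ as $\frac{1}{d_{n,\ell}(n-1)}\int_{\bbr^n}\frac{(\Del^\ell_y g)(x)}{|y|^{2n-1}}\,dy$, and replacing $x$ by $(x',x_n+|x'|^2)$ (the action of $B_1^{-1}$) yields (\ref{fvv}). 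The hypotheses on $\ell$ (namely $\ell=n-1$ for $n$ even, $\ell>n-1$ arbitrary for $n$ odd), the limiting interpretation $\lim_{\e\to0}\int_{|y|>\e}$, and the modes of convergence all carry over verbatim, since $B_1$ and $B_1^{-1}$ are measurable bijections that do not affect $L^p$ membership in an essential way — though here one should note that $B_1$ as defined is not measure-preserving globally, so the cleanest route is to observe that Theorem \ref{loa} is applied to $\Psi$ and reconstructs $\psi\in L^p(\rn)$, and then $f=B_1^{-1}\psi$ is just the stated substitution; the $L^p$ claim on $f$ is part of the hypothesis "$f\in L^p(\rn)$" rather than something to be reproved.

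The main (minor) obstacle is bookkeeping: making sure the composition order $f=B_1^{-1}T^{-1}B_2^{-1}F$ is applied correctly and that the constant $2^{n-1}$ from the Jacobian of $z'=y'/2$ exactly absorbs $(2\pi)^{1-n}$ into $\pi^{1-n}$. There is no analytic difficulty beyond what is already in Theorem \ref{loa}; the existence of the various limits and the validity of the formula for $\psi=T^{-1}\Psi$ are quoted directly, and the argument is purely a substitution. I would therefore present the proof compactly: state $f=B_1^{-1}\psi$, $\psi=T^{-1}\Psi$, $\Psi=B_2^{-1}F$; expand $g$ and perform the $z'=y'/2$ substitution to obtain (\ref{fvv1s}); invoke Theorem \ref{loa} for $\psi$; and substitute $x_n\mapsto x_n+|x'|^2$ to reach (\ref{fvv}), noting that all convergence assertions transfer unchanged.
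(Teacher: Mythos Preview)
Your proposal is correct and follows essentially the same route as the paper: write $f=B_1^{-1}\psi$, $\psi=T^{-1}\Psi$, $\Psi=B_2^{-1}F$ from Lemma~\ref{swa}, apply Theorem~\ref{loa} to $\Psi$, and substitute $z'=y'/2$ in (\ref{ggg}) to obtain (\ref{fvv1s}). One small correction to your aside: $B_1$ \emph{is} measure-preserving (the shear $(x',x_n)\mapsto(x',x_n-|x'|^2)$ has Jacobian $1$, cf.\ (\ref{iar})), so $\psi=B_1 f\in L^p(\rn)$ directly and Theorem~\ref{loa} applies without caveat.
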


Another inversion result follows from  Theorem \ref {ine2}.

\begin{theorem}  \label{invek2} Let  $f\in C_c^\infty (\rn)$,  $F=Pf$, and let $g$ be defined by (\ref{fvv1s}).
If $n$ is odd,  then
 \be \label{f1}f(x)= B_1^{-1}\left [ (-\Del)^{(n-1)/2}\,g(x)\right ].\ee
 If $n$ is even,  then
  \be \label{f2} f(x)\!=\!B_1^{-1}\left [ c_n \int_{\bbr^n}\!
\frac{(-\Del)^{n/2-1}g (x)\!-\!(-\Del)^{{n/2-1}}g (x\!-\!y)}{|y|^{n+1}} \,
dy\right ], \ee
where $c_n=\Gam ((n\!+\!1)/2)/\pi^{(n+1)/2}$, $\int_{\bbr^n}=\lim\limits_{\e \to 0}\int_{|y|>\e}$
uniformly in $x \in \bbr^n$.
\end{theorem}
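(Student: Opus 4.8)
The plan is to obtain Theorem~\ref{invek2} by transferring Theorem~\ref{ine2} through the factorization $Pf=B_2TB_1f$ established in Lemma~\ref{swa}. Writing $F=Pf$ and setting $\psi=B_1f$, $\Psi=B_2^{-1}F$, Lemma~\ref{swa} gives $\Psi=T\psi$, so $f=B_1^{-1}\psi$ with $\psi$ recoverable from $\Psi$ by the inversion formulas of Theorem~\ref{ine2}. The first step is to verify that $B_1$ maps $C_c^\infty(\rn)$ into itself: since $(B_1 f)(x)=f(x',x_n-|x'|^2)$ is a polynomial change of the last variable, $\psi=B_1f$ is again smooth and compactly supported (its support is the image of $\supp f$ under the proper map $x\mapsto(x',x_n+|x'|^2)$), so Theorem~\ref{ine2} applies verbatim to $\psi$.

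Second, I would identify the auxiliary function $g$ appearing in Theorem~\ref{ine2} — which is built from $\Psi$ via formula (\ref{ggg}) — with the function $g$ defined by (\ref{fvv1s}) in terms of $F$. This is a direct substitution: in (\ref{ggg}) one replaces $\Psi$ by $B_2^{-1}F$, using $(B_2^{-1}F)(x)=F(x'/2,\,x_n+|x'|^2/4)$, then changes variables $y'\mapsto 2z'$ in the integral. The $(2\pi)^{1-n}$ prefactor combined with the Jacobian $2^{n-1}$ from $dy'=2^{n-1}dz'$ produces the $\pi^{1-n}$ in (\ref{fvv1s}), and the weight $(1+|y'|^2)^{-(n-1)/2}$ becomes $(1+4|z'|^2)^{-(n-1)/2}$; the argument $x_n-y'\cdot x'$ becomes $x_n-2z'\cdot x'+|z'|^2$ after accounting for the shift $x_n\mapsto x_n+|x'|^2/4$ and the scaling $x'\mapsto x'/2$ inside $B_2^{-1}$. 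One must be careful here: the $g$ in Theorem~\ref{ine2} is a function of the $T$-variable, and to get (\ref{fvv1s}) as written one evaluates it at the point whose coordinates are those appearing after undoing $B_2$, so a small bookkeeping check is needed to confirm the stated form of the argument of $F$ in (\ref{fvv1s}).

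Third, once $g$ is matched, Theorem~\ref{ine2} yields $\psi(x)=(-\Del)^{(n-1)/2}g(x)$ for $n$ odd and the hypersingular-integral expression for $n$ even, with the same constant $c_n=\Gam((n+1)/2)/\pi^{(n+1)/2}$ and the same truncated-integral interpretation. Applying $B_1^{-1}$, i.e.\ substituting $x_n\mapsto x_n+|x'|^2$, gives exactly (\ref{f1}) and (\ref{f2}). The convergence statements (uniform in $x$) transfer because $B_1^{-1}$ is a diffeomorphism of $\rn$ that is proper on the relevant supports, hence preserves uniform convergence on $\rn$.

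The main obstacle is purely one of bookkeeping rather than of substance: correctly composing the three changes of variable ($B_2^{-1}$ on $F$, the dilation $y'=2z'$, and finally $B_1^{-1}$ on the reconstructed $\psi$) so that every shift, scaling, Jacobian, and normalizing constant lands in the right place, and in particular confirming that the weight and the argument of $F$ in (\ref{fvv1s}) come out exactly as stated. One should also note explicitly that the Laplacian $(-\Del)^{(n-1)/2}$ in (\ref{f1})--(\ref{f2}) acts on $g$ \emph{before} the substitution $x_n\mapsto x_n+|x'|^2$ is performed — i.e.\ $B_1^{-1}$ is applied to the already-differentiated function — which is precisely what the bracket notation in (\ref{f1}) and (\ref{f2}) encodes; conflating the order of these operations would give a different (wrong) formula.
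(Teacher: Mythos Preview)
Your proposal is correct and follows exactly the paper's approach: the paper derives Theorem~\ref{invek2} by writing $f=B_1^{-1}\psi$, $\psi=T^{-1}\Psi$, $\Psi=B_2^{-1}F$ from Lemma~\ref{swa}, identifying the auxiliary function $g$ in (\ref{ggg}) with that in (\ref{fvv1s}) via the substitution $y'=2z'$, and then invoking Theorem~\ref{ine2}. Your write-up in fact supplies more detail than the paper itself, which dispatches the result in a single line (``Another inversion result follows from Theorem~\ref{ine2}''); the one minor slip is in your verbal description of the bookkeeping---the $+|z'|^2$ in the second argument of $F$ comes from the term $|y'|^2/4$ in $B_2^{-1}$ evaluated at the integration variable $y'$, not from a shift $x_n\mapsto x_n+|x'|^2/4$ in the outer variable---but the computation itself, when carried out as you outline, lands on (\ref{fvv1s}) exactly.
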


\subsection {Connection between  $H$ and  $T$.}\label {99jhc}
Combining Lemmas \ref{ii7y} and  \ref{swa}, we obtain
\[ H\vp=Q_2TQ_1 \vp, \qquad Q_1=B_1 e_{-} A, \qquad  Q_2= A^{-1}r_+ B_2.\]
Explicit expressions for $Q_1$, $Q_2$, and their inverses can be easily obtained using
 (\ref {bars}), (\ref {bars1}), (\ref {aaa}), and (\ref {aaa1}). We have
\bea\label{fsw2} (Q_1 \vp)(x) \! \!&=& \! \! (x_n  \!- \!|x'|^2)_+^{-1/2} \vp (x', \sqrt {x_n \! - \!|x'|^2}), \quad x \in \rn;\qquad \\
\label{fsw3}(Q_2 f)(x',r)\! \! &=&\! \!  r f (2x',  r^2 -|x'|^2),\quad x' \in \bbr^{n-1}, \quad r>0.\eea
Similarly,
\bea\label{fsw4}
(Q_1^{-1} \psi)(y)\! \! \!&=& \! \!\! y_n \psi (y',  y_n^2 +|y'|^2), \quad y \in \rn_+;\\
\label{fsw5}(Q_2^{-1} \Phi)(x)\! \!\!&=&\!\! \!\left (x_n \!+\!\frac{|x'|^2}{4}\right )_+^{-1/2}\!\! \!\Phi \!\left (\frac{x'}{2}, \sqrt{x_n \!+\!\frac{|x'|^2}{4}}\,\right  ), \quad x \in \rn.\qquad \eea

Thus we get one more important connection.

\begin{lemma}\label {swar} The equality
\be \label {btsz}  H\vp=Q_2TQ_1 \vp,\ee
holds provided that either side of it exists in the Lebesgue sense.
\end{lemma}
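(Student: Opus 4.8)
The plan is to derive \eqref{btsz} purely formally by composing the two connections already established, and then to track the domains of existence so that the ``provided either side exists in the Lebesgue sense'' clause is justified. Concretely, Lemma \ref{ii7y} gives $H\vp = A^{-1} r_+ P e_{-} A\vp$, and Lemma \ref{swa} gives $P f = B_2 T B_1 f$ for any $f$ on $\rn$. Substituting $f = e_{-} A\vp$ into the latter and then applying $A^{-1} r_+$ on the left yields
\[
H\vp = A^{-1} r_+ B_2 T B_1 e_{-} A\vp = Q_2 T Q_1 \vp,
\]
with $Q_1 = B_1 e_{-} A$ and $Q_2 = A^{-1} r_+ B_2$, which is exactly the claimed identity. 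So the ``approach'' is really just bookkeeping: the mathematical content is entirely contained in the two lemmas we are allowed to cite.

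The main step that requires care is verifying that the explicit formulas \eqref{fsw2}--\eqref{fsw5} for $Q_1,Q_2$ and their inverses are the ones obtained by literally composing $B_1, e_{-}, A$ (resp. $A^{-1}, r_+, B_2$). For $Q_1$: starting from $\vp$ on $\rn_+$, apply $A$ to get $(A\vp)(z) = z_n^{-1/2}\vp(z',\sqrt{z_n})$ on $\rn_+$, extend by zero to get $e_{-}A\vp$ on $\rn$, then apply $B_1$, which replaces $x_n$ by $x_n - |x'|^2$; since $e_{-}A\vp$ vanishes off $\rn_+$, the result is supported where $x_n - |x'|^2 > 0$, giving $(Q_1\vp)(x) = (x_n - |x'|^2)_+^{-1/2}\vp(x', \sqrt{x_n - |x'|^2})$, matching \eqref{fsw2}. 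For $Q_2$: from $f$ on $\rn$, apply $B_2$ to get $F(2x', x_n - |x'|^2)$, restrict to $\rn_+$ via $r_+$, then apply $A^{-1}$, which by \eqref{aaa1} sends $(x',x_n)\mapsto x_n\,\Phi(x',x_n^2)$; with the variable renamed $r$ this produces $(Q_2 f)(x',r) = r\,f(2x', r^2 - |x'|^2)$, matching \eqref{fsw3}. The inverse formulas \eqref{fsw4}--\eqref{fsw5} follow by composing the inverses in the opposite order, using \eqref{bars1} and \eqref{aaa}, \eqref{aaa1}; these are routine substitutions. One small subtlety worth a sentence: $e_{-}$ and $r_+$ are not mutually inverse on all of $\rn$, but the compositions above only ever feed $r_+$ functions that are already supported in $\overline{\rn_+}$ after the preceding $B_2$-twist restricted appropriately, and likewise $Q_1$ produces functions supported where $x_n > |x'|^2$, so no information is lost and the ``Lebesgue sense'' equivalence of the two sides is inherited from the corresponding clauses in Lemmas \ref{ii7y} and \ref{swa}.

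I do not expect a genuine obstacle here; the only thing to be vigilant about is the interaction of the zero-extension $e_{-}$ with the shear $B_1$, because one must confirm that the support condition $\{x_n > |x'|^2\}$ that appears implicitly in $P_0$ versus $P$ (cf. \eqref{par}, \eqref{bar}) is faithfully reproduced by $Q_1$, so that \eqref{btsz} is an honest identity of functions and not merely an identity on a sub-region. Since Lemma \ref{ii7y} already packages precisely that support bookkeeping (the map $r_+ P e_{-}$ restricting to $P_0$), invoking it directly avoids redoing the computation. Thus the proof is: cite Lemmas \ref{ii7y} and \ref{swa}, compose, read off $Q_1 = B_1 e_{-}A$ and $Q_2 = A^{-1} r_+ B_2$, note that \eqref{fsw2}--\eqref{fsw5} are the explicit forms of these maps and their inverses via \eqref{bars}, \eqref{bars1}, \eqref{aaa}, \eqref{aaa1}, and observe that the Lebesgue-existence clause transfers through the composition because each intermediate map ($A$, $A^{-1}$, $B_1$, $B_2$, $e_{-}$, $r_+$) is a measure-preserving or measure-scaling change of variables composed with multiplication by a positive function, hence preserves Lebesgue integrability of the relevant integrand exactly.
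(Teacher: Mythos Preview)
Your proposal is correct and follows exactly the paper's own approach: the paper derives \eqref{btsz} simply by combining Lemmas \ref{ii7y} and \ref{swa}, reading off $Q_1=B_1 e_{-}A$ and $Q_2=A^{-1}r_+ B_2$, and then recording the explicit formulas \eqref{fsw2}--\eqref{fsw5} via \eqref{bars}, \eqref{bars1}, \eqref{aaa}, \eqref{aaa1}. Your write-up is more detailed than the paper's (which treats the lemma as an immediate corollary), but the content and route are the same.
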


Note that
\bea
||Q_1 \vp||_p^p &=& \intl_{\bbr^{n-1}} dx' \intl_{|x'|^2}^\infty  (x_n -|x'|^2)^{-p/2} \, |\vp (x', \sqrt {x_n -|x'|^2})|^p \,d x_n\nonumber\\
\label {reda}&=& 2\intl_{\rn_+}  |\vp (y)|^p \, y_n^{1-p}\, dy\eea
and
\bea
||Q_2^{-1} \Phi||_{q, s}^q &=&  \intl_{\bbr^{n-1}} \Big [\intl_\bbr |(Q_2^{-1}\Phi) (x',x_n)|^s
dx_n\Big]^{q/s}\!\! dx'\nonumber\\
&=&\intl_{\bbr^{n-1}} \Bigg [\intl_\bbr \left (x_n +\frac{|x'|^2}{4}\right )_+^{-s/2} \Bigg |\Phi \!\left (\frac{x'}{2}, \sqrt{x_n +\frac{|x'|^2}{4}}\right )\Bigg |^s dx_n \Bigg ]^{q/s}\nonumber\\
\label {reda1}&=&2^{n-1 +q/s} \intl_{\bbr^{n-1}} \Big [ \intl_0^\infty |\Phi (x', r)|^s \, r^{1-s}\, dr\Big ]^{q/s}\, dx'.\eea

\section{Proof of the Main Results}

\subsection{ Proof of Theorem \ref{trs}} Denote
\[
||\vp||_p^{\sim}= \Bigg (\,\intl_{\rn_+} \! |\vp (y)|^p \, y_n^{1-p}\, dy\Bigg )^{1/p},  \]
\[
||\Phi||^{\sim}_{q,s}=\Bigg (\, \intl_{\bbr^{n-1}} \!\!\Big[ \intl_0^\infty \!|\Phi (x', r)|^s \, r^{1-s}\, dr\Big ]^{q/s}\!\! dx' \Bigg )^{1/q}.\]
Then (\ref{reda}) and (\ref{reda1}) yield
\[
||Q_1 \vp||_p= 2^{1/p} ||\vp||^{\sim}_p, \qquad ||Q_2^{-1} \Phi||_{q, s}= 2^{(n-1)/q +1/s} ||\Phi||^{\sim}_{q,s}.\]
Our aim is to show that $||H\vp||^{\sim}_{q,s} \le c\, ||\vp||^{\sim}_p$ for some constant $c$.  Indeed,
by Lemma \ref{swar} and Theorem \ref{tos},
\bea
||H\vp||^{\sim}_{q,s}&=& 2^{(1-n)/q -1/s} ||Q_2^{-1} H\vp||_{q, s}=  2^{(1-n)/q -1/s} ||TQ_1 \vp||_{q, s}\nonumber\\
&\le& c\, ||Q_1 \vp||_p =  2^{1/p} c\, ||\vp||^{\sim}_p,\nonumber\eea
as desired.

\subsection{Inversion formulas}
Let us proceed to inversion of $H\vp$.
A formal inversion formula $\vp= (Q_1^{-1}T^{-1} Q_2^{-1})H\vp$ can be made precise using explicit formulas for $Q_1^{-1}$ and $Q_2^{-1}$ combined with Theorems \ref{loa} and \ref{ine2}. Specifically,
let $H\vp=\Phi$. Setting $\Psi =Q_2^{-1} \Phi$  in (\ref{ggg}) and using  (\ref{fsw5}), in slightly different notation we obtain
\bea
g (x)&=&\pi^{1-n} \intl_{\bbr^{n-1}} (x_n - 2z' \cdot x' +  |z'|^2)_+^{-1/2}\nonumber\\
\label{fvv1}&\times& \Phi \left(z', \sqrt {x_n - 2z' \cdot x' +  |z'|^2} \right )\, \frac{dz'}{(1+ 4|z'|^2)^{(n-1)/2}}.\qquad \eea

Then Theorems \ref{loa} yields the following result.

\begin {theorem}\label {loam} A function $\vp$ satisfying
\[
\intl_{\rn_+}  |\vp (y)|^p \, y_n^{1-p}\, dy<\infty, \qquad 1 \le p < n/(n-1),\]
 can be reconstructed from $\Phi=H\vp$ by the formula
\be\label{fvvm}
\vp(y) \! = \!
\frac{y_n}{d_{n,\ell}(n\!-\!1)}\intl_{\bbr^n} \!\! \frac{(\Del^\ell_t
g)(y', y_n^2 +|y'|^2)}{|t|^{2n-1}}\, dt,\ee
where $g$ is defined by (\ref{fvv1}) and the right-hand side has the same meaning as in (\ref{for1}).
\end{theorem}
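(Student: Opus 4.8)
The plan is to derive Theorem~\ref{loam} as a direct translation of the transversal Radon transform inversion (Theorem~\ref{loa}) through the chain of substitutions established in Section~3. By Lemma~\ref{swar} we have $H\vp = Q_2 T Q_1\vp$, so formally $\vp = Q_1^{-1} T^{-1} Q_2^{-1} H\vp$. Writing $\Phi = H\vp$, I would set $\Psi = Q_2^{-1}\Phi$, apply Theorem~\ref{loa} to recover $\psi = T^{-1}\Psi$, and then apply $Q_1^{-1}$ using the explicit formula \eqref{fsw4}. The output of this composition is exactly \eqref{fvvm}, with the auxiliary function $g$ already computed in \eqref{fvv1} by substituting $\Psi = Q_2^{-1}\Phi$ (via \eqref{fsw5}) into the defining formula \eqref{ggg} for $g$.

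The key steps, in order: First I would observe that under the hypothesis $\intl_{\rn_+}|\vp(y)|^p y_n^{1-p}\,dy < \infty$, equation \eqref{reda} shows $Q_1\vp \in L^p(\rn)$ with $\|Q_1\vp\|_p = 2^{1/p}\|\vp\|_p^{\sim} < \infty$; hence Theorem~\ref{tos} (or the argument in the proof of Theorem~\ref{trs}) guarantees $\Phi = H\vp = Q_2 T Q_1\vp$ exists in the Lebesgue sense, so Lemma~\ref{swar} applies and $\psi := Q_1\vp \in L^p(\rn)$ is a genuine $L^p$-function with $T\psi = Q_2^{-1}\Phi$. Second, I would invoke Theorem~\ref{loa} for this $\psi$ with $\Psi = T\psi = Q_2^{-1}\Phi$: since $1 \le p < n/(n-1)$, the theorem reconstructs $\psi(x)$ by \eqref{for1} with $g$ given by \eqref{ggg}. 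Third, I would substitute the explicit form of $Q_2^{-1}\Phi$ from \eqref{fsw5} into \eqref{ggg}; this is the routine computation already displayed as \eqref{fvv1}. Finally, I would apply \eqref{fsw4}, namely $(Q_1^{-1}\psi)(y) = y_n\,\psi(y', y_n^2 + |y'|^2)$, to the reconstruction formula for $\psi$, which replaces $x$ by $(y', y_n^2 + |y'|^2)$ and multiplies by the Jacobian-type factor $y_n$, yielding \eqref{fvvm}.

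The convergence statements — that the truncated integral $\lim_{\e\to 0}\int_{|t|>\e}$ in \eqref{fvvm} converges in $L^p$-norm and a.e., and in $\sup$-norm for continuous vanishing $\vp$ — are inherited verbatim from Theorem~\ref{loa} applied to $\psi = Q_1\vp$, since $Q_1^{-1}$ is a pointwise substitution composed with multiplication by $y_n$, a locally bounded weight; one should note that the a.e.\ and norm convergence for $\psi$ transfer to $\vp$ because the change of variables $x \mapsto (y', y_n^2+|y'|^2)$ is a diffeomorphism of $\rn_+$ onto $\rn_+$ with smooth positive Jacobian.

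The main obstacle, modest as it is here, is verifying that Lemma~\ref{swar} genuinely applies — i.e., that all intermediate transforms in $H\vp = Q_2 T Q_1\vp$ exist in the Lebesgue sense under the stated integrability hypothesis on $\vp$ alone, rather than under a smoothness or compact-support assumption. This is handled by the norm identity \eqref{reda}, which shows $Q_1\vp \in L^p$, combined with the boundedness of $T$ on the relevant mixed-norm spaces (Theorem~\ref{tos}); once $Q_1\vp$ is a bona fide $L^p$ function the remaining steps are formal substitutions, and the entire theorem reduces to bookkeeping of the weight factors $y_n$, $(1+4|z'|^2)^{-(n-1)/2}$, and the argument shifts.
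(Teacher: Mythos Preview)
Your proposal is correct and follows essentially the same route as the paper: the paper's proof is precisely the chain $\vp = Q_1^{-1}T^{-1}Q_2^{-1}\Phi$ via Lemma~\ref{swar}, with $g$ obtained by substituting $\Psi = Q_2^{-1}\Phi$ from \eqref{fsw5} into \eqref{ggg} (yielding \eqref{fvv1}) and Theorem~\ref{loa} supplying the inversion of $T$, after which $Q_1^{-1}$ from \eqref{fsw4} produces \eqref{fvvm}. Your added verification that the integrability hypothesis on $\vp$ forces $Q_1\vp \in L^p(\rn)$ via \eqref{reda}, so that Theorem~\ref{loa} genuinely applies, is a useful detail the paper leaves implicit.
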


In a similar way,   Theorem \ref{ine2} implies the following statement.

\begin{theorem}  \label{invek2m} Let  $H\vp=\Phi$, where $\vp$ is a $C^\infty$ function with compact support in $\rn_+$. Suppose that
 $g$ is defined by (\ref{fvv1}).
If $n$ is odd,  then
 \be \label{f1a}  \vp(y)= Q_1^{-1}\left [ (-\Del)^{(n-1)/2}\,g(x)\right ](y).\ee
 If $n$ is even,  then
  \be \label{f2a} \vp(y)\!=\!Q_1^{-1}\left [ c_n \int_{\bbr^n}\!
\frac{(-\Del)^{n/2-1}g (x)\!-\!(-\Del)^{{n/2-1}}g (x\!-\!z)}{|z|^{n+1}} \,
dz\right ](y), \ee
where     $c_n=\Gam ((n\!+\!1)/2)/\pi^{(n+1)/2}$, $\int_{\bbr^n}=\lim\limits_{\e \to 0}\int_{|z|>\e}$
uniformly in $x \in \bbr^n$, and $Q_1^{-1}$ acts in the $x$-variable.
\end{theorem}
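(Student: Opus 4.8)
\textbf{Proof proposal for Theorem \ref{invek2m}.}

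The plan is to read off the statement as a direct consequence of Lemma \ref{swar} together with the smooth inversion result Theorem \ref{ine2}, applied to the transversal Radon transform. The starting observation is that $\vp\in C_c^\infty(\rn_+)$, so the map $Q_1=B_1 e_- A$ sends $\vp$ to a function $\psi=Q_1\vp$ that is, at least away from the parabolic boundary set $\{x_n=|x'|^2\}$, smooth with controlled decay; one must check that in fact $\psi\in C_c^\infty(\rn)$ (or at least lies in the class for which Theorem \ref{ine2}, or its Lipschitz-type extension in \cite[Subsection 4.1.2]{Ru12}, applies). Granting this, Lemma \ref{swar} gives $\Phi=H\vp=Q_2 T Q_1\vp = Q_2 T\psi$, hence $T\psi=Q_2^{-1}\Phi$, and the function $g$ attached to $\Psi=T\psi$ by formula (\ref{ggg}) is, after substituting $\Psi=Q_2^{-1}\Phi$ via (\ref{fsw5}), exactly the $g$ written in (\ref{fvv1}) — this is the same bookkeeping already carried out in the discussion preceding Theorem \ref{loam}.

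Next I would invoke Theorem \ref{ine2} for $\psi$: if $n$ is odd, $\psi(x)=(-\Del)^{(n-1)/2}g(x)$, and if $n$ is even, $\psi(x)=c_n\int_{\bbr^n}\bigl[(-\Del)^{n/2-1}g(x)-(-\Del)^{n/2-1}g(x-z)\bigr]|z|^{-n-1}\,dz$ with the stated hypersingular-integral interpretation, $c_n=\Gam((n+1)/2)/\pi^{(n+1)/2}$. Finally, since $\psi=Q_1\vp$ and $Q_1$ is invertible with $Q_1^{-1}$ given by (\ref{fsw4}), one applies $Q_1^{-1}$ (acting in the $x$-variable) to both sides to recover $\vp(y)=Q_1^{-1}[\psi](y)$, which is precisely (\ref{f1a}) and (\ref{f2a}). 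The existence and uniformity assertions (the limit $\lim_{\e\to0}\int_{|z|>\e}$ uniform in $x$) are inherited verbatim from Theorem \ref{ine2}, since $Q_1^{-1}$ is merely a smooth change of variables composed with multiplication by the factor $y_n$, which is harmless on compact sets.

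The main obstacle I anticipate is the regularity transfer at the first step: verifying that $Q_1\vp$ genuinely falls within the hypothesis class of Theorem \ref{ine2}. The operator $Q_1$ introduces the singular weight $(x_n-|x'|^2)_+^{-1/2}$ and the substitution $x_n\mapsto\sqrt{x_n-|x'|^2}$, so $Q_1\vp$ is typically \emph{not} $C_c^\infty(\rn)$ even when $\vp$ is — it has a one-sided square-root-type singularity along the paraboloid $x_n=|x'|^2$ and a jump there. One therefore cannot quote Theorem \ref{ine2} literally; instead one must either (i) appeal to the weakened hypotheses of \cite[Subsection 4.1.2]{Ru12}, checking that $Q_1\vp$ satisfies the requisite Lipschitz/decay conditions in the region where it is supported, or (ii) argue that the inversion formula is insensitive to such one-sided singularities because the final answer $Q_1^{-1}[\,\cdot\,]$ is evaluated only at points $x=(y',y_n^2+|y'|^2)$ with $x_n>|x'|^2$, i.e. in the open region where $Q_1\vp$ is smooth. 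Option (ii) is the natural route: localize, mollify across the boundary, apply Theorem \ref{ine2} to the mollified function, and pass to the limit — the limit being legitimate because both $g$ (an integral average, hence continuous in the relevant variables) and the pointwise reconstruction depend continuously on the mollification parameter at interior points. Everything else — the algebraic identities for $Q_1$, $Q_2$ and their inverses, and the chain $\Phi=Q_2T Q_1\vp$ — is already in hand from Section 3 and needs only to be cited.
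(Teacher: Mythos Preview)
Your approach is exactly the paper's: the factorization $H=Q_2TQ_1$ from Lemma \ref{swar}, the identification of $g$ in (\ref{fvv1}) as the $g$ of (\ref{ggg}) with $\Psi=Q_2^{-1}\Phi$, an appeal to Theorem \ref{ine2}, and a final application of $Q_1^{-1}$. The paper itself offers nothing beyond ``In a similar way, Theorem \ref{ine2} implies the following statement.''

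One correction, however: the ``main obstacle'' you anticipate does not exist. Because $\vp\in C_c^\infty(\rn_+)$ has compact support in the \emph{open} half-space, there is a $\del>0$ with $\vp(y',y_n)=0$ whenever $y_n<\del$. Hence $(Q_1\vp)(x)=(x_n-|x'|^2)_+^{-1/2}\vp(x',\sqrt{x_n-|x'|^2})$ vanishes unless $x_n-|x'|^2\ge\del^2$, so its support is a compact set lying at positive distance from the paraboloid $\{x_n=|x'|^2\}$. On that region the weight $(x_n-|x'|^2)^{-1/2}$ and the substitution $x\mapsto(x',\sqrt{x_n-|x'|^2})$ are smooth, whence $Q_1\vp\in C_c^\infty(\rn)$ outright. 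Theorem \ref{ine2} therefore applies verbatim, and no mollification or appeal to the Lipschitz extension in \cite[Subsection 4.1.2]{Ru12} is needed.
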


\vskip 0.3 truecm

\subsection{Some generalizations} It is natural to take one step further and proceed from the  Radon transforms  (\ref{ppar}) and (\ref{bart})
to the corresponding  one-sided fractional integrals
 \bea\label{poaxz}
(P_{\pm}^{\,\a} f)(x) &=&\frac{1}{\Gam (\a)} \intl_{\bbr^n}(y_n -|y'|^2)_{\pm} ^{\a -1} \,f(x-y)\, dy,\\
\label {99jh1}
 (T_{\pm}^\a f)(x)&=& \frac{1}{\Gam (\a)}\intl_{\rn} (x_n -y_n)_{\pm}^{\a -1} f(y', y_n + x' \cdot y')\, dy,\eea
 which yield $P$ and $T$, respectively, as  $\a\to 0$. 
 These generalizations are of interest on their own right.

The integrals (\ref{99jh1}) seem to be new.
The $L^p$-$L^q$ estimates of the localized modifications of (\ref{poaxz}),  containing a cut-off function under the sign of integration, were briefly discussed by
 Littman \cite {Litt}.  The non-localized integrals  (\ref{99jh1}) differ from those in  \cite {Litt}, though they have some  features in common.
 We plan to address this topic in another paper.

\vskip 0.5 truecm

\end{document}